\def\R{\mathbb{R}}
\def\N{\mathbb{N}}
\renewcommand{\subset}{\subseteq}
\def\empty{\varnothing}
\newcommand{\set}[1]{\{1,\dots,#1\}}
\newcommand{\Set}{\mathbf{Set}}
\newcommand{\Prob}{\mathrm{Prob}}
\newcommand{\Meas}{\mathrm{Meas}}
\newcommand{\FinMeas}{\mathrm{FinMeas}}
\newcommand{\cF}{\mathcal{F}}
\newcommand{\cS}{\mathcal{S}}
\newcommand{\op}{\mathrm{op}}
\crefname{equation}{Equation}{Equations}
\newtheorem{theorem}{Theorem}
\crefname{theorem}{Theorem}{Theorems}
\crefname{lemma}{Lemma}{Lemmas}
\crefname{proposition}{Proposition}{Propositions}
\newtheorem{corollary}[theorem]{Corollary}
\crefname{corollary}{Corollary}{Corollaries}
\crefname{conjecture}{Conjecture}{Conjectures}
\theoremstyle{definition} 
\newtheorem{definition}[theorem]{Definition}
\crefname{definition}{Definition}{Definitions}
\newtheorem{remark}[theorem]{Remark}
\crefname{remark}{Remark}{Remarks}
\newtheorem{example}[theorem]{Example}
\crefname{example}{Example}{Examples}
\crefname{figure}{Figure}{Figures}
\title{Sheaves of Probability}
\author{Owen D.\ Biesel}
\begin{document}

\maketitle

\abstract{What does it mean for multiple agents' credence functions to be consistent with each other, if the agents have distinct but overlapping sets of evidence? Mathematical philosopher Michael Titelbaum's rule, called Generalized Conditionalization (GC), sensibly requires each pair of agents to acquire identical credences if they updated on each other's evidence. However, GC allows for paradoxical arrangements of agent credences that we would not like to call consistent. We interpret GC as a gluing condition in the context of sheaf theory, and show that if we further assume that the agents' evidence is logically consistent then the sheaf condition is satisfied and the paradoxes are resolved.}

\section{Introduction}

Suppose that an agent models the world as a state space $X$ with a credence function given by a probability measure $P$ on $X$. Then after learning evidence $A$ (regarded as a subset of $X$ consisting of those states in which the agent encounters that evidence), the agent has updated credences given by a new probability measure $P'$. For the agent's before-and-after credences to be regarded as consistent, the probability measures $P$ and $P'$ should be related by conditionalization: for each event $B\subset X$, we should have
\[P'(B) = P(B|A) = \frac{P(B\cap A)}{P(A)}.\]
An equivalent viewpoint is that the original probability measure $P$ on $X$ has been \emph{restricted} to a probability measure $P|_A$ on $A$ given by
\[P|_A(B) \coloneqq \frac{P(B)}{P(A)}\text{ for }B\subset A.\]

In \cite{Titelbaum2013}, Michael Titelbaum tackles the question of what it means for what a collection of agents to have consistent credence functions despite having learned arbitrarily overlapping sets of evidence. He uses a criterion he calls \emph{generalized conditionalization}, translated below to the language of restricting probabilities:

\begin{definition}\label{gc}
 Suppose we have agents $1$ through $n$ whose models of the world use a shared state space $X$, and that for each $i\in\set{n}$ agent $i$ has learned evidence $A_i\subset X$ and has a credence function given by a probability measure $P_i$ on $A_i$. The agents' beliefs $P_i$ satisfy \emph{generalized conditionalization} (GC) if for each pair of agents $i$ and $j$, the restrictions $P_i|_{A_i\cap A_j}$ and $P_j|_{A_i\cap A_j}$ agree. 
\end{definition}

\begin{example}\label{not_gc}
 Suppose that agents 1, 2, and 3 have credences given by the following probability mass functions on subsets of state space $X = \{a,b,c,d,e\}$:
 \[\begin{array}{r|c|c|c|c|c}
 & a & b & c & d & e \\ \hline 
 P_1 & 75\% & 10\% & 15\%  & & \\
 P_2 &  & 20\% &  30\% & 50\% & \\
 P_3 & & & 10\% & 40\% & 50\%
 \end{array}\]
 In this setting, agent 1 has learned evidence $A_1 = \{a,b,c\}$, agent 2 has learned $A_2 = \{b,c,d\}$, and agent 3 has learned $A_3 = \{c,d,e\}$. We can check whether the three agents satisfy GC by comparing the restrictions for each pair of agents: 1 and 2, 1 and 3, and 2 and 3.
 
 For agents 1 and 2, we restrict $P_1$ and $P_2$ to the intersection of their evidence $A_1\cap A_2 = \{b,c\}$. Agent 1 assigns total probability $10\% + 15\% = 25\%$ to $\{b,c\}$, while agent 2 assigns probability $20\% + 30\% = 50\%$, so their restricted probability measures are
  \[\renewcommand{\arraystretch}{2}\begin{array}{r|c|c}
 & b &c \\ \hline
 P_1|_{A_1\cap A_2} & \dfrac{10\%}{25\%} = 40\% & \dfrac{15\%}{25\%} = 60\% \\
 P_2|_{A_1\cap A_2} & \dfrac{20\%}{50\%} = 40\% & \dfrac{30\%}{50\%} = 60\%
 \end{array}\]
 These are the same restricted probability measures, so agents 1 and 2 have compatible credences. 
 
 Agents 1 and 3 also have compatible credences: we have $A_1\cap A_3 = \{c\}$ and both $P_1$ and $P_3$ restrict to the probability mass function $c\mapsto 100\%$. However, agents 2 and 3 do not have compatible credences: $P_2$ restricted to $A_2\cap A_3 = \{c,d\}$ assigns probability $30\%/(30\% + 50\%) = 37.5\%$ to $c$, but $P_3$ restricted to $\{c,d\}$ assigns only $10\%/(10\% + 40\%) = 20\%$ to $c$.
 
 Therefore, agents 1 and 2 satisfy GC (as do agents 1 and 3), but agents 1, 2, and 3 all together do not satisfy GC.
\end{example}

Mathematicians familiar with sheaf theory may recognize \cref{gc} as reminiscent of the ``gluing condition'' in the definition of a sheaf (which we review in \cref{sheaves}). The natural question, then, is whether probability measures on $A_1,\ldots,A_n$ satisfying GC ``glue'' to a single probability measure on $\bigcup_{i=1}^n A_i$ that restricts to all of them? In other words, if the agents' beliefs are ``consistent'' in the sense of GC, then could they be considered to have shared a common probability distribution prior to learning their individual sets of evidence?

In \cref{not_gc}, because agents 1 and 2 satisfy GC we can ask whether there is a single probability measure $P$ on $A_1\cup A_2 = \{a,b,c,d\}$ that restricts to $P_1$ on $\{a,b,c\}$ and to $P_2$ on $\{b,c,d\}$. Indeed there is:
\[\renewcommand{\arraystretch}{1.5}\begin{array}{c|c|c|c|c}
 & a & b & c & d  \\ \hline 
P &  60\% &  8\% &  12\%  & 20\% \\
P|_{a,b,c} = P_1 & \dfrac{60\%}{80\%} = 75\% & \dfrac{8\%}{80\%} = 10\% & \dfrac{12\%}{80\%} = 15\% & \vphantom{\sum_\sum}\\
P|_{b,c,d} = P_2 & & \dfrac{8\%}{40\%} = 20\% & \dfrac{12\%}{40\%} = 30\% & \dfrac{20\%}{40\%} = 50\%
 \end{array}\]

However, the probabilities of agents satisfying GC don't always glue together:

\begin{example}
 Consider three agents with the following probability measures on subsets of $X = \{a,b,c\}$:
   \[\begin{array}{r|c|c|c}
 & a &  b &c \\ \hline
 P_1 & 40\% & 60\% & \\
 P_2 &  & 40\% & 60\% \\
 P_3 & 60\% & & 40\%
 \end{array}\]
 These agents satisfy GC: each pair of probability measures restricts to 100\% on the one element of $X$ they have in common. However, there are two senses in which these agents' beliefs are \emph{in}consistent:
 \begin{enumerate}
  \item Logical inconsistency: Collectively, the individual probability measures rule out the entire state space: $a$ is ruled out by $P_2$, $b$ by $P_3$, and $c$ by $P_1$.
  \item Prior inconsistency: There is no single probability measure on all of $X$ that conditionalizes to all three of the $P_i$; we would need to have $P(a)<P(b)<P(c)$ to restrict to $P_1$ and $P_2$, but $P(a)>P(c)$ to restrict to $P_3$.
 \end{enumerate}
\end{example}

The main result of this paper is that these three notions of (in)consistency are related: if a set of agents satisfy both GC and logical consistency, then they must also satisfy prior consistency. More formally:

\begin{theorem}[proven as \cref{prob_e}]\label{main-theorem}
 Let each agents $i\in\set{n}$ have credence given by probability measure $P_i$ on a set $A_i\subset X$. Suppose that $P_i|_{A_i\cap A_j} = P_j|_{A_i\cap A_j}$ for all $i,j\in\set{n}$, and that there exists a set $E\subset \bigcap_{i=1}^n A_i$ such that each $P_i(E)>0$. Then there exists a unique probability measure $P$ on $A\coloneqq \bigcup_{i=1}^n A_i$ such that $P|_{A_i} = A_i$. 
\end{theorem}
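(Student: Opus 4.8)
The plan is to remove the normalization ambiguity that keeps probability measures from forming a sheaf by rescaling each $P_i$ into a \emph{finite} measure using the common set $E$, glue the rescaled measures via the sheaf property of finite measures, and then renormalize. Concretely, since $P_i(E)>0$ for every $i$, I would set $\mu_i\coloneqq\frac{1}{P_i(E)}P_i$, a finite measure on $A_i$ with $\mu_i(E)=1$. The point of dividing by $P_i(E)$ (rather than by some $P_i(A_i\cap A_j)$, which would vary with $j$) is that it pins down a single global scale shared by all the agents, which is exactly the data GC alone fails to supply.

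The key step is to show that the $\mu_i$ agree on overlaps, i.e.\ $\mu_i|_{A_i\cap A_j}=\mu_j|_{A_i\cap A_j}$ as finite measures for all $i,j$. Here I would feed the test set $E$ into the GC hypothesis. Since $E\subset\bigcap_k A_k\subset A_i\cap A_j$, for any measurable $B\subset A_i\cap A_j$ the equality $P_i|_{A_i\cap A_j}=P_j|_{A_i\cap A_j}$ gives $P_i(B)/P_i(A_i\cap A_j)=P_j(B)/P_j(A_i\cap A_j)$, and the same identity applied to $B=E$ lets me eliminate the intersection normalizers; dividing the two relations yields $P_i(B)/P_i(E)=P_j(B)/P_j(E)$, which is precisely $\mu_i(B)=\mu_j(B)$.

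With agreement on overlaps established, I would invoke the fact that finite measures form a sheaf (reviewed in \cref{sheaves}) to glue the $\mu_i$ into a single finite measure $\mu$ on $A=\bigcup_{i=1}^n A_i$ with $\mu|_{A_i}=\mu_i$. Since the cover is finite and each $\mu_i(A_i)=1/P_i(E)<\infty$, subadditivity gives $\mu(A)<\infty$, while $\mu(A)\ge\mu(E)=1>0$, so I can define $P\coloneqq\mu/\mu(A)$. It then remains to check $P|_{A_i}=P_i$: for $B\subset A_i$ the factors of $\mu(A)$ cancel, leaving $P|_{A_i}(B)=\mu(B)/\mu(A_i)=\mu_i(B)/\mu_i(A_i)=P_i(B)/P_i(A_i)=P_i(B)$, using $P_i(A_i)=1$.

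For uniqueness I would run the rescaling argument backwards. If $P$ is any probability measure on $A$ with $P|_{A_i}=P_i$, then $P(B)=P(A_i)\,P_i(B)$ for $B\subset A_i$; taking $B=E$ forces $P(A_i)=P(E)/P_i(E)$, so $P(B)=P(E)\,\mu_i(B)$ on each $A_i$, hence $P=P(E)\,\mu$ on all of $A$. Normalization $P(A)=1$ then fixes $P(E)=1/\mu(A)$, recovering the measure constructed above. I expect the main obstacle to be the gluing itself --- verifying that the local agreement of the $\mu_i$ legitimately assembles into a global finite measure (the measure-theoretic sheaf condition); in the discrete setting of the examples this reduces to the observation that $\mu_i(\{x\})$ is well defined independently of which $A_i\ni x$ one uses, but in general it is where the genuine measure-theoretic content lies.
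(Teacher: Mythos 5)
Your proposal is correct and follows essentially the same route as the paper's proof of \cref{prob_e}: rescale each $P_i$ by $1/P_i(E)$ to get finite measures agreeing on overlaps, glue via the sheaf of finite measures, renormalize, and prove uniqueness by reversing the rescaling and appealing to uniqueness of the glued measure.
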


In fact, the probability measure $P$ in \cref{main-theorem} is unique, as we prove after developing the variant of sheaf theory appropriate to the measurable spaces underlying probability theory.

\section{Sheaves on Measurable Spaces}\label{sheaves}

Ordinarily, sheaves are defined on \emph{topological} spaces: given a topological space $X$ with a designated collection of ``open'' subsets, one assigns a set $\cF(U)$ to each open subset $U\subset X$, and a restriction function $\cF(U)\to\cF(V)$ whenever $V\subset U$. If, when we have a nested sequence of subsets $W\subset V\subset U$, restricting from $U$ to $V$ and then to $W$ is the same as restricting from $U$ to $W$ directly, we say that $\cF$ is a \emph{functor} or \emph{presheaf}. If in addition $\cF$ satisfies the \emph{compatibility condition}, that whenever we have a collection of open subsets $\{U_i\}_{i\in I}$ with union $U$, and elements $s_i\in \cF(U_i)$ such that the restrictions of each pair $s_i$ and $s_j$ to $U_i\cap U_j$ are equal, there must exist a unique element of $\cF(U)$ whose restriction to each $U_i$ is $s_i$, then we say $\cF$ is a \emph{sheaf}.

In this paper, we use a slightly different notion of sheaf that is suitable for a \emph{measurable space}, a set $X$ equipped with a $\sigma$-algebra of ``measurable'' subsets. Because arbitrary unions of measurable are not necessarily measurable, care must be taken to say what a cover is. We use the following definition of a cover, applicable to the collection of measurable subsets of a given measurable space:

\begin{definition}\label{cover}
A \emph{lattice} of sets $\cS$ is a collection of sets closed under finite intersections and unions. We say that a finite collection of sets $A_1,\ldots,A_n\in\cS$ with $n\ge 0$ is a \emph{cover} for another set $A\in\cS$ if $\bigcup_{i=1}^n A_i = A$.
\end{definition}

This notion of cover defines a \emph{Grothendieck pretopology} on $\cS$ (which is the reason we require closure under  finite intersections), and is therefore a suitable setting to define a sheaf. (See, for example, \cite{maclane1994} for details.) In our context, the definition of sheaf unpacks to the following.

\begin{definition}
A \emph{presheaf} $\cF$ on a lattice $\cS$ is a functor $\cF:\cS^\op\to\Set$, i.e.\ an assignment of a set $\cF(A)$ to each $A\in\cS$, together with a restriction function $\cF(A) \to \cF(B)$ whenever $A,B\in\cS$ with $B\subset A$, such that if $C\subset B\subset A$ then the composite of restrictions $\cF(A)\to \cF(B)\to\cF(C)$ equals the restriction function $\cF(A)\to\cF(C)$. 

We say that $\cF$ is a \emph{sheaf} if for each cover $\{A_i\}_{i=1}^n$ of $A$ in $\cS$, and for each choice of $s_i\in\cF(A_i)$ such that each pair $s_i$ and $s_j$ have the same restriction in $\cF(A_i\cap A_j)$, there is a unique element $s\in\cF(A)$ whose restriction to each $A_i$ is $s_i$.
\end{definition}

Our goal will be to prove that for a given measurable space $X$, we have a sheaf on $[\empty,X]$ assigning a measurable subset of $X$ to the set of all measures on it, and if we also designate a specific measurable subset $E$, we have a sheaf on $[E,X]$ assigning a measurable subset of $X$ containing $E$ to the set of probability measures on it that assign $E$ positive probability.

\begin{remark}\label{countable_counterexample}
In \cref{cover}, we do not allow covers by families of arbitrary cardinality, because, for example, a measure on $\R$ is not determined by the measure of each individual point. The reason we do not allow countably infinite covers is more subtle: the family of uniform probability measures $P_n$ on $\set{n}$ for $n\in\N=\{1,2,3,\ldots\}$ are compatible in the sense of all restricting to each other, but there is no ``uniform'' probability measure on all of $\N$ that restricts to each $P_n$.
\end{remark}

\section{Sheaves of Measures}

Fix a measurable space $X$, whose $\sigma$-algebra of measurable subsets we denote using interval notation as $[\empty,X]$, i.e.\ the collection of all measurable subsets $A$ such that $\empty\subset A\subset X$. Every measurable subset $A\subset X$ inherits the structure of a measurable space, with $\sigma$-algebra $[\empty,A]$. Given a measure $\mu$ on $X$, we denote its restriction to $A$ by $\mu|_A$, which is just the ordinary restriction of the function $\mu: [\empty,X]\to[0,\infty]$ to $[\empty,A]$.

\begin{theorem}\label{measure_sheaf}
 Define $\Meas$ to be the functor $[\empty,X]^\op\to\Set$ sending:
 \begin{itemize}
 \item each measurable set $A$ to the set $\Meas(A) \coloneqq \{$measures $\mu$ on $A\}$, and 
 \item each inclusion of measurable sets $B\subseteq A$ to the restriction function $\Meas(A)\to\Meas(B)$ sending $\mu \mapsto \mu|_B$.
 \end{itemize}
 This functor $\Meas: [\empty,X]^\op\to\Set$ is a sheaf.
\end{theorem}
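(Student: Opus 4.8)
The plan is to first dispatch the presheaf (functoriality) axiom, which is immediate: restricting a measure $\mu$ on $A$ to $B$ and then to $C\subset B\subset A$ is the same as restricting the defining function $[\empty,A]\to[0,\infty]$ to the sub-$\sigma$-algebra $[\empty,C]$ in a single step. The substance is the sheaf condition, so I fix a cover $\{A_i\}_{i=1}^n$ of $A$ and measures $\mu_i\in\Meas(A_i)$ that agree pairwise on overlaps. A preliminary observation is that pairwise agreement propagates to agreement on every finite intersection: for $\empty\ne S\subset\set{n}$ and $i,j\in S$, restricting the equation $\mu_i|_{A_i\cap A_j}=\mu_j|_{A_i\cap A_j}$ further to $\bigcap_{k\in S}A_k$ shows that all the $\mu_k$ with $k\in S$ induce one common measure on $\bigcap_{k\in S}A_k$.

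The key device is to refine the cover into the \emph{atoms} of the finite Boolean algebra it generates. For each nonempty $T\subset\set{n}$ set
\[A_T \coloneqq \Bigl(\bigcap_{i\in T}A_i\Bigr)\setminus\Bigl(\bigcup_{j\notin T}A_j\Bigr),\]
a measurable subset of $X$. These atoms are pairwise disjoint, their union is $A$ (this is where I use that the $A_i$ cover $A$, so no point carries the empty index set $T=\empty$), and crucially $A_T\subset A_i$ for every $i\in T$. I would then define the candidate glued measure by summing the local data over the partition: for measurable $B\subset A$,
\[\mu(B) \coloneqq \sum_{\empty\ne T\subset\set{n}}\mu_{i(T)}(B\cap A_T),\]
where $i(T)$ is any chosen index in $T$. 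This is independent of the choices $i(T)$ because $A_T\subset A_i\cap A_j$ for all $i,j\in T$ and the $\mu_i$ agree there.

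Verifying that $\mu$ is a genuine measure is then routine precisely because $\mu$ is a finite sum of measures evaluated on the disjoint pieces $B\cap A_T$: non-negativity and $\mu(\empty)=0$ are immediate, and countable additivity follows from that of each $\mu_{i(T)}$ together with the freedom to interchange the finite $T$-sum with a countable sum of non-negative terms. To check the restriction property $\mu|_{A_i}=\mu_i$, I would observe that for measurable $B\subset A_i$ only the atoms with $i\in T$ contribute (since $A_T\cap A_i=\empty$ when $i\notin T$), that those atoms partition $A_i$, and that on each of them $\mu_{i(T)}$ agrees with $\mu_i$; summing recovers $\mu_i(B)$. Uniqueness is forced by the same partition: any measure restricting to all the $\mu_i$ must agree with $\mu_{i(T)}$ on each atom $A_T$ (as $A_T\subset A_{i(T)}$), and hence, by additivity over the finite partition, is determined on every measurable $B\subset A$.

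The argument has no serious obstacle; the only points requiring care are the well-definedness of the partition sum (handled by the propagation of pairwise compatibility to all intersections) and the bookkeeping that the atoms with $i\in T$ partition $A_i$. The degenerate case $n=0$ is consistent with the definition: then $A=\empty$, the compatibility hypotheses are vacuous, and the unique measure on the empty space is the zero measure, which the formula returns as an empty sum.
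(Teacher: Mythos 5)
Your proof is correct, and it is the same basic strategy as the paper's---partition $A$ into disjoint measurable pieces each contained in some $A_i$, define $\mu(B)$ as the sum of the local measures over the pieces, and read off both existence and uniqueness from additivity---but with a different choice of partition. The paper uses the $n$-piece ``first occurrence'' decomposition $B = \coprod_{i=1}^n \bigl(B\cap A_i\setminus\bigcup_{j<i}A_j\bigr)$, in which each piece comes with a canonical index $i$, so no well-definedness check is needed and the defining formula is visibly forced by the requirement $\mu|_{A_i}=\mu_i$ (which is how the paper gets uniqueness almost for free). You instead refine all the way down to the $2^n-1$ atoms $A_T$ of the Boolean algebra generated by the cover; this buys a formula that is symmetric in the indices and makes the uniqueness and restriction arguments very transparent (only atoms with $i\in T$ meet $A_i$, and they partition it), at the cost of having to verify independence of the chosen representative $i(T)$ and of an exponential rather than linear number of terms. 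Both verifications go through; your treatment of the degenerate case $n=0$ and your remark that pairwise compatibility suffices on the atoms (since each atom lies in a pairwise intersection) are correct.
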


\begin{proof}
 Let $A_1,\dots,A_n$ be measurable subsets of $X$, and $\mu_i$ a measure on $A_i$ for each $i\in \set{n}$. Suppose that for each $i,j\in\set{n}$, we have $\mu_i|_{A_i\cap A_j} = \mu_j|_{A_i\cap A_j}$. We must show that there exists a unique measure $\mu$ on $A \coloneqq \bigcup_{i=1}^n A_i$ such that $\mu|_{A_i} = \mu_i$ for each $i\in\set{n}$.
 
 Let $B\subset A$. Using the disjoint union decomposition $B = \coprod_{i=1}^n (B \cap A_i \setminus \bigcup_{j<i} A_j)$, if $\mu$ exists we must have
 \[\mu(B) \coloneqq \sum_{i=1}^n \mu_i\left(B \cap A_i \setminus \bigcup_{j<i} A_j\right),\]
 which we use as its definition. Each term in the sum is automatically nonnegative and countably additive, so $\mu$ is a measure, and all that remains is to check that $\mu|_{A_k} = \mu_k$ for each $k\in\set{n}$. If we assume that $B\subset A_k$, then we have
\[ \mu(B) = \sum_{i=1}^n \mu_i\left(B \cap A_i \setminus \bigcup_{j<i} A_j\right)=  \sum_{i=1}^n \mu_k\left(B \cap A_i \setminus \bigcup_{j<i} A_j\right) = \mu_k(B),\]
the middle equality holding since $\mu_i|_{A_i\cap A_k}= \mu_k|_{A_i\cap A_k}$ and the last holding since $\mu_k$ is a measure on $A_k$.
\end{proof}

\begin{corollary}\label{finite_measure_sheaf}
 The subfunctor $\FinMeas\subset \Meas: [\empty,X]^\op \to \Set$ sending each measurable subset $A\subset X$ to the set of \emph{finite} measures on $A$ is also a sheaf.
\end{corollary}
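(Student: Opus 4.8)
The plan is to build on \cref{measure_sheaf} rather than repeat its gluing construction. Since $\FinMeas$ is defined as a subfunctor of the sheaf $\Meas$, I would reduce the claim to three observations: that $\FinMeas$ really is a subpresheaf (restriction preserves finiteness), that uniqueness of the glued measure is inherited for free, and that the measure produced by \cref{measure_sheaf} is automatically finite whenever the measures being glued are.

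First I would confirm that $\FinMeas$ is a subpresheaf of $\Meas$. If $\mu$ is a finite measure on $A$ and $B\subset A$, then $\mu|_B(B) = \mu(B) \le \mu(A) < \infty$, so the restriction $\mu|_B$ is again finite; thus the restriction maps of $\Meas$ carry finite measures to finite measures, and $\FinMeas$ is a genuine subpresheaf.

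For the sheaf condition, fix a cover $\{A_i\}_{i=1}^n$ of $A$ and compatible finite measures $\mu_i \in \FinMeas(A_i)$. Viewing each $\mu_i$ as an element of $\Meas(A_i)$, \cref{measure_sheaf} yields a unique measure $\mu$ on $A$ with $\mu|_{A_i} = \mu_i$ for all $i$. Uniqueness within $\FinMeas$ is immediate: any finite measure restricting correctly is in particular a measure restricting correctly, hence equal to $\mu$. So the only thing left to verify is that $\mu$ itself is finite.

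This last step is where the finiteness of the cover does the work, and is the only point requiring any care. Using the explicit formula from the proof of \cref{measure_sheaf} together with $A_i \subset A$, I would estimate
\[\mu(A) = \sum_{i=1}^n \mu_i\left(A_i \setminus \bigcup_{j<i} A_j\right) \le \sum_{i=1}^n \mu_i(A_i) < \infty,\]
the final inequality holding because it is a finite sum of finite quantities. Hence $\mu\in\FinMeas(A)$, completing the proof. It is worth noting that this bound would fail for a countably infinite cover, consistent with the phenomenon described in \cref{countable_counterexample}.
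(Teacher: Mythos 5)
Your proposal is correct and follows essentially the same route as the paper: reduce to \cref{measure_sheaf} and observe that the glued measure satisfies $\mu(A) \le \sum_{i=1}^n \mu_i(A_i) < \infty$ because the cover is finite. The paper's proof states only this final bound, so your additional checks (that restriction preserves finiteness and that uniqueness is inherited) are just a more explicit spelling-out of the same argument.
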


\begin{proof}
 Given compatible finite measures $\mu_i$ on $A_i$, the total measure of $A = \bigcup_{i=1}^n A_i$ is bounded by
 \[\mu(A) \le \sum_{i=1}^n \mu_i(A_i) < \infty\]
 since the sum is finite and each term is finite. (This is the \emph{only} place in the paper where we use the fact that our coverings are finite families.)
\end{proof}

\section{Sheaves of Probability}

In this section we fix a measurable space $X$ with algebra of sets $[\empty,X]$ and a measurable subset $E\subset X$.

\begin{definition}
 Given a probability measure $P$ on $A$, and a subset $B\subset A$ for which $P(B)>0$, we may define the \emph{restriction} of $P$ to $B$ by
 \[P|_B(\cdot) \coloneqq P(\cdot)/P(B).\]
 This is a probability measure on $B$; it is the one obtained by first restricting $P$ as a measure and then normalizing it so that the measure of $B$ is $1$.
\end{definition}

\begin{theorem}\label{prob_e}
Define $\Prob_E$ to be the functor $[E,X]^\op\to\Set$ sending:
\begin{itemize}
\item each measurable set $A$ containing $E$ to the set $\Prob_E(A) \coloneqq \{$probability measures $P$ on $A$ such that $P(E)>0\}$, and 
\item each inclusion of measurable sets $B\subset A$ containing $E$ to the restriction function $\Prob_E(A)\to\Prob_E(B)$ sending $P\mapsto P|_B$. 
 \end{itemize}
 This functor $\Prob_E:[E,X]^\op\to\Set$ is a sheaf.
\end{theorem}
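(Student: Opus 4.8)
The plan is to reduce the claim to the sheaf property of $\FinMeas$ (\cref{finite_measure_sheaf}) by passing from the normalized probability measures $P_i$ to honestly compatible finite measures. The obstruction to applying \cref{finite_measure_sheaf} directly is that the restriction map in $\Prob_E$ renormalizes, so the hypothesis $P_i|_{A_i\cap A_j} = P_j|_{A_i\cap A_j}$ only says that $P_i$ and $P_j$ agree on $A_i\cap A_j$ \emph{up to a positive scalar}, not on the nose. The fixed set $E$ is what lets us choose these scalars coherently: since every object of $[E,X]$ contains $E$, the set $E$ lies in every pairwise intersection $A_i\cap A_j$, and the positivity condition $P_i(E)>0$ lets us use the $P_i$-mass of $E$ as a common yardstick.

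Concretely, first I would set $\mu_i \coloneqq P_i/P_i(E)$, the finite measure on $A_i$ (finite since $\mu_i(A_i)=1/P_i(E)<\infty$) rescaled so that $\mu_i(E)=1$. The key step is to check that the $\mu_i$ are compatible \emph{as ordinary measures}, i.e.\ $\mu_i|_{A_i\cap A_j}=\mu_j|_{A_i\cap A_j}$ in the sense of \cref{measure_sheaf}. For any measurable $B\subset A_i\cap A_j$ the hypothesis gives $P_i(B)/P_i(A_i\cap A_j) = P_j(B)/P_j(A_i\cap A_j)$; specializing to $B=E$ and dividing the general identity by the $E$-identity cancels the unknown normalizing constants $P_i(A_i\cap A_j)$ and $P_j(A_i\cap A_j)$, yielding $P_i(B)/P_i(E) = P_j(B)/P_j(E)$, that is $\mu_i(B)=\mu_j(B)$. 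This cancellation is the heart of the argument, and it is exactly where both $E\subset A_i\cap A_j$ and $P_i(E)>0$ are used.

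Having produced compatible finite measures, I would invoke \cref{finite_measure_sheaf} to glue them to a unique finite measure $\mu$ on $A$ with $\mu|_{A_i}=\mu_i$. Since $E\subset A$ and $\mu(E)=\mu_i(E)=1$, we have $0<\mu(E)\le\mu(A)<\infty$, so I can normalize to the probability measure $P\coloneqq \mu/\mu(A)$, which satisfies $P(E)=1/\mu(A)>0$ and hence lies in $\Prob_E(A)$. To see $P|_{A_i}=P_i$, note that for $B\subset A_i$ the normalization factors of $\mu(A)$ cancel, giving $P|_{A_i}(B)=\mu(B)/\mu(A_i)=\mu_i(B)/\mu_i(A_i)=P_i(B)$.

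Finally, for uniqueness suppose $P'\in\Prob_E(A)$ also restricts to every $P_i$. Applying the same rescaling, $\mu'\coloneqq P'/P'(E)$ satisfies $\mu'(E)=1$ and, using $P'|_{A_i}=P_i$ together with the cancellation above, $\mu'|_{A_i}=\mu_i$ for every $i$; the uniqueness clause of \cref{finite_measure_sheaf} then forces $\mu'=\mu$. Thus $P'$ is a probability measure proportional to $\mu$, and there is only one such, namely $P$. The only real difficulty is the normalization bookkeeping in the second paragraph; once the $\mu_i$ are seen to glue, the remaining steps are routine rescalings.
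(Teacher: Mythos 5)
Your proposal is correct and follows essentially the same route as the paper's own proof: rescale each $P_i$ by $P_i(E)$ to obtain genuinely compatible finite measures with $\mu_i(E)=1$, glue via \cref{finite_measure_sheaf}, renormalize, and get uniqueness from the uniqueness clause of the measure sheaf. The cancellation you identify as the heart of the argument is exactly the computation the paper performs.
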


\begin{proof}
 Let $A_1,\dots,A_n$ be a collection of measurable subsets of $X$, all containing $E$, and for each $i\in\set{n}$ let $P_i$ be a probability measure on $A_i$ such that $P_i(E)>0$. Suppose that for all $i,j\in\set{n}$ we have $P_i|_{A_i\cap A_j} = P_j|_{A_i\cap A_j}$. Let $A = \bigcup_{i=1}^n A_i$. We must show that there exists a unique probability measure $P$ on $A$ such that $P|_{A_i} = P_i$ for each $i\in\set{n}$.
 
 Existence: Scale each probability measure $P_i$ to an ordinary finite measure $\mu_i$ on $A_i$ by defining  $\mu_i(\cdot) \coloneqq P_i(\cdot) / P_i(E)$, so that $\mu_i(E) = 1$. Then it follows that $\mu_i|_{A_i\cap A_j} = \mu_j|_{A_i\cap A_j}$; indeed, if $B\subset A_i \cap A_j$, then
 \begin{align*}
  \mu_i(B) &= P_i(B)/P_i(E) \\
  &= \frac{P_i(B)/P_i(A_i\cap A_j)}{P_i(E)/P_i(A_i\cap A_j)}\\
  &= \frac{P_i|_{A_i\cap A_j}(B)}{P_i|_{A_i\cap A_j}(E)}
 \end{align*}
so since $P_i|_{A_i\cap A_j} = P_j|_{A_i\cap A_j}$ we must have $\mu_i(B) = \mu_j(B)$. 

Therefore by \cref{finite_measure_sheaf} there must exist a finite measure $\mu$ on $A$ such that $\mu|_{A_i} = \mu_i$ for each $i\in\set{n}$. Define a probability measure $P$ on $A$ by $P(\cdot)\coloneqq \mu(\cdot)/\mu(A)$; this is the desired gluing of the $P_i$. Indeed, if $B\subset A_i$, then 
\[ P|_{A_i}(B) = \frac{P(B)}{P(A_i)} = \frac{\mu(B)/\mu(A)}{\mu(A_i)/\mu(A)} = \frac{\mu_i(B)}{\mu_i(A_i)} = \frac{P_i(B)/P_i(E)}{P_i(A_i)/P_i(E)} = P_i(B).\]
since $P_i(A_i) = 1$. Therefore $P|_{A_i} = P_i$ as desired.

Uniqueness: Conversely, suppose that $P'$ is another probability measure on $A$ such that $P'_{A_i} = P_i$ for each $i\in\set{n}$. Scaling $P'$ to a measure $\mu'$ such that $\mu'(E) = 1$, we again find that $\mu'|_{A_i} = \mu_i = \mu|_{A_i}$ and therefore $\mu' = \mu$ by \cref{measure_sheaf}. It follows that $P' = P$ upon dividing through by $\mu'(A) = \mu(A)$. 
\end{proof}

\section{Conclusion and Further Directions}

The main results of this paper can be generalized in various directions. \Cref{measure_sheaf,finite_measure_sheaf,prob_e} remain valid when we consider measures that are merely \emph{finitely additive}, rather than the usual countably additive measures, since our finite covers do not require us to break any sets into countable disjoint unions. This may be useful in contexts in which countable additivity leads to contradictions; see \cite{Ross2012}.

On the other hand, if we do wish to allow for countable covering families, then \cref{measure_sheaf} still holds, while \cref{finite_measure_sheaf} holds if we change ``finite'' to ``$\sigma$-finite.'' It may be possible to generalize \cref{prob_e} to this setting if we work not with probability measures, but with equivalence classes of $\sigma$-finite measures up to scaling: the family of probability measures from \cref{countable_counterexample} would glue to become the equivalence class of the ordinary counting measure on $\N$, and similarly the uniform probability measures on $[-n,n]$ would glue into the class of Lebesgue measure on $\R$.

A further generalization might be to pass from sheaves on lattices of sets to sheaves on more general categories. It may be possible to create a framework for consistent beliefs among agents with self-locating uncertainty, for whom the same or indistinguishable agents may find themselves embedded in a scenario in more than one way (say, at different times or after duplication).

\bibliographystyle{plain} 
\bibliography{RefList}

\begin{thebibliography}{1}

\bibitem{maclane1994}
Saunders MacLane and Ieke Moerdijk.
\newblock {\em Sheaves in geometry and logic: A first introduction to topos
  theory}.
\newblock Springer Science \& Business Media, 1994.

\bibitem{Ross2012}
Jacob Ross.
\newblock All roads lead to violations of countable additivity.
\newblock {\em Philosophical studies}, 161:381--390, 2012.

\bibitem{Titelbaum2013}
Michael~G. Titelbaum.
\newblock {\em Quitting Certainties: A Bayesian Framework Modeling Degrees of
  Belief}.
\newblock Oxford University Press, Oxford, 2013.

\end{thebibliography}

\end{document}